\documentclass[12pt]{amsart}
\usepackage{amsmath,latexsym,amsfonts,amssymb,amsthm}
\usepackage{geometry}
\geometry{a4paper,top=3.5cm,bottom=3.8cm,left=2.5cm,right=2.5cm}
\usepackage{hyperref}
\usepackage{mathrsfs}
\usepackage{graphicx,color}
\usepackage[alphabetic]{amsrefs}
\usepackage[all,cmtip]{xy}
\usepackage{bbm, stmaryrd}
\usepackage{tabu}
\usepackage{enumitem}
\usepackage{tikz}
\usepackage{tikz-cd}
\usepackage{comment}

\setcounter{tocdepth}{1}

\newtheorem{prop}{Proposition}[section]
\newtheorem{thm}[prop]{Theorem}

\newtheorem{conj}[prop]{Conjecture}
\newtheorem{lem}[prop]{Lemma}

\theoremstyle{definition}
\newtheorem{que}[prop]{Question}
\newtheorem{say}[prop]{}
\newtheorem{defn}[prop]{Definition}
\newtheorem{expl}[prop]{Example}

\newtheorem*{claim*}{Claim}

\newcommand{\bP}{\mathbb{P}}

\newcommand{\bR}{\mathbb{R}}
\newcommand{\bA}{\mathbb{A}}
\newcommand{\bQ}{\mathbb{Q}}

\newcommand{\bN}{\mathbb{N}}

\newcommand{\bG}{\mathbb{G}}

\newcommand{\bm}{\mathfrak{m}}

\newcommand{\cX}{\mathcal{X}}
\newcommand{\cY}{\mathcal{Y}}

\newcommand{\cO}{\mathcal{O}}

\newcommand{\cE}{\mathcal{E}}
\newcommand{\cD}{\mathcal{D}}

\newcommand{\fa}{\mathfrak{a}}

\newcommand{\Aut}{\mathrm{Aut}}

\newcommand{\ord}{\mathrm{ord}}
\newcommand{\Val}{\mathrm{Val}}

\newcommand{\QM}{\mathrm{QM}}

\newcommand{\hvol}{\widehat{\rm vol}}

\newcommand{\fX}{\mathfrak{X}}
\newcommand{\fD}{\mathfrak{D}}
\newcommand{\fE}{\mathfrak{E}}

\newcommand{\cU}{\mathcal{U}}

\newcommand{\oH}{\overline{H}}

\numberwithin{equation}{section}

\title{A note on Koll\'ar valuations}

\date{}

\author{Yuchen Liu}
\address{Department of Mathematics, Northwestern University, Evanston, IL 60208, USA.}
\email{yuchenl@northwestern.edu}

\author{Chenyang Xu}
\address{Department of Mathematics, Princeton University, Princeton, NJ 08544, USA}
\email     {chenyang@princeton.edu}

\begin{document}

\maketitle

\begin{center}
In memory of Gang Xiao
\end{center}

\begin{abstract}
We prove the set of Koll\'ar valuations in the dual complex of a klt singularity with a fixed complement is path connected. We also classify the case when the dual complex is one dimensional. 
\end{abstract}

\section{Introduction}
One new question with a birational geometric feature,  arising from algebraic K-stability theory is to ask what kind of quasi-monomial valuation $v$ over a polarized projective varieties $(X,L)$ satisfies that 
${\rm Gr}_v\bigoplus_{m\in \bN} H^0(X,mL)$ is finitely generated. For now the only main case we have some knowledge is in the Fano setting, i.e.

\begin{que}[Global version]
Let $(X,\Delta)$ be a klt log Fano pair, and $v$ is an lc place of a $\mathbb Q$-complement. Let $r$ such that $r(K_X+\Delta)$ is Cartier. Then what condition implies that 
\[
\bigoplus_{m\in r\cdot \bN} {\rm Gr}_v H^0(X,-m(K_X+\Delta))
\]
is finitely generated? 
\end{que}

There is a local version which implies the global version by taking the cone.

\begin{que}[Local version]
Let $(X={\rm Spec}(R),\Delta)$ be a klt singularity, and $v\in \Val_{X,x}$ is an lc place of a $\mathbb Q$-complement. Then what condition implies that 
$
 {\rm Gr}_v R
$
is finitely generated? 
\end{que}

When $v$ is a divisorial valuation, i.e. ${v}=\ord_E$, then it follows from \cite{BCHM} and our assumption that $E$ is an lc place of a $\mathbb Q$-complement that ${\rm Gr}_E R$ is finitely generated. However, for $v$ with rational rank ${\rm rank}_{\bQ}(v)\ge 2$, the question is quite unclear. Built on \cite{LXZ-HRFG}, in \cite{XZ-HRFG}, a smaller class is sorted out.
\begin{defn}
Let $x\in(X,\Delta)$ be a klt singularity. We say $v\in \Val_{X,x}$ is a \emph{Koll\'ar valuation} if there exists a birational model $\mu\colon Y\to X$ with ${\rm Ex}(\mu)=\sum^p_{i=1}E_i$
and $D\ge 0$ on $Y$ such that 
\begin{enumerate}
\item $(Y,E+D)$ is q-dlt with $\lfloor E+D\rfloor=E$ and $v\in {\rm QM}(Y,E)$, 
\item $K_Y+E+D\ge \mu^*(K_X+\Delta)$ and $-K_Y-E-D$ is ample over $X$. 
\end{enumerate}
We say that $(Y,E+D)$ is a \emph{Koll\'ar model} over $X$, extracting $E_1,\dots,E_p$.
\end{defn}
We have 
\begin{thm}[\cite{XZ-HRFG}]
If $v$ is a Koll\'ar valuation, then ${\rm Gr}_vR$ is finitely generated.
\end{thm}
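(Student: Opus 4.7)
Let $\mu\colon Y \to X$ be a Koll\'ar model extracting $E = \sum_{i=1}^p E_i$ with q-dlt boundary $D$ and $v \in {\rm QM}(Y,E)$. Because $(Y,E+D)$ is q-dlt, near the center of $v$ the pair admits a toroidal chart, in which $v = \sum_{i=1}^p a_i \ord_{E_i}$ for weights $a_i \ge 0$. For $\vec m = (m_1,\dots,m_p) \in \bN^p$, set $I_{\vec m} := \mu_*\cO_Y\bigl(-\sum_{i=1}^p m_i E_i\bigr) \subset R$. These form an $\bN^p$-indexed filtration satisfying $I_{\vec m}\cdot I_{\vec m'}\subset I_{\vec m+\vec m'}$, and the valuation ideals of $v$ are recovered as
\[
\fa_\lambda(v) = \sum_{\vec a \cdot \vec m \ge \lambda} I_{\vec m}.
\]

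\textbf{Main step.} The plan is to show that the $\bN^p$-multigraded Rees algebra
\[
\cR := \bigoplus_{\vec m \in \bN^p} I_{\vec m}
\]
is a finitely generated $R$-algebra. Writing $K_Y + E + D = \mu^*(K_X+\Delta) + \sum b_i E_i$ with $b_i \ge 0$ by condition (2), the ampleness of $-(K_Y+E+D)$ over $X$ translates to $-\sum b_i E_i$ being ample over $X$, so $\mu\colon(Y,E+D)\to X$ is of relative Fano type in the cone spanned by $-E_1,\dots,-E_p$. After a small perturbation $(Y, (1-\epsilon)E+D)$ (preserving relative ampleness), the pair becomes klt, so the relative BCHM theorem gives finite generation of the relative Cox ring $\bigoplus_{\vec m \in \bZ^p} \mu_*\cO_Y(\sum m_i E_i)$, of which $\cR$ is a direct-summand subalgebra corresponding to the anti-effective orthant.

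\textbf{Extraction.} Given homogeneous generators $f_1,\dots,f_N$ of $\cR$ with $f_j \in I_{\vec m_j}$, any $g \in \fa_\lambda(v)$ admits an expression $g = \sum_\alpha c_\alpha \prod_j f_j^{n_{\alpha,j}}$ in which each monomial lies in $I_{\sum_j n_{\alpha,j} \vec m_j}$. Passing to initial forms with respect to the $\vec a$-weighting, the image of $g$ in ${\rm Gr}_v R$ becomes a polynomial in the finitely many initial forms $\bin_v(f_1),\dots,\bin_v(f_N)$. Consequently ${\rm Gr}_v R$ is finitely generated as a graded ring.

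\textbf{Main obstacle.} The hard step is establishing finite generation of the full multigraded algebra $\cR$ simultaneously over all $p$ exceptional divisors, rather than merely the Rees algebra of a single divisorial valuation (which would follow from standard BCHM). This uses both Koll\'ar-model hypotheses crucially---the q-dlt condition to perform the klt perturbation without creating new log canonical places, and the ampleness of $-(K_Y+E+D)/X$ to stay within the relative Fano regime. Without the ampleness condition, the q-dlt models of lc places of complements need not yield finitely generated multi-Rees algebras, which is exactly the contrast between arbitrary lc place models and Koll\'ar models underlying the work of \cite{LXZ-HRFG} and \cite{XZ-HRFG}.
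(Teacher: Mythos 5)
This theorem is quoted in the paper from \cite{XZ-HRFG} without proof, so there is no in-paper argument to compare against; judging your proposal on its own terms, it contains a genuine gap at its core. The strategy ``finite generation of the multigraded Rees algebra $\cR=\bigoplus_{\vec m}I_{\vec m}$ implies finite generation of ${\rm Gr}_vR$'' does not work, for two concrete reasons. First, the identity $\fa_\lambda(v)=\sum_{\vec a\cdot\vec m\ge\lambda}I_{\vec m}$ is asserted but not proved, and it is false in general: the qdlt model is toroidal only formally (or \'etale-locally) near the center of $v$, so an element $f\in R$ with $v(f)\ge\lambda$ need not decompose globally into pieces whose individual $\ord_{E_i}$-vectors already account for its $v$-value. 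Second, even where such a decomposition $g=\sum_\alpha c_\alpha\prod_j f_j^{n_{\alpha,j}}$ exists, the passage to initial forms in the Extraction step fails: $\bin_v$ does not commute with sums, since cancellation among the leading terms forces $v(g)$ strictly above the minimum of the $v$-values of the monomials, and then $\bin_v(g)$ lives in higher degree and need not be a polynomial in $\bin_v(f_1),\dots,\bin_v(f_N)$. This cancellation phenomenon is precisely the obstruction that makes higher-rank finite generation a theorem rather than a formality.

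A structural way to see the problem: your argument only uses that $(Y,E+D)$ is qdlt and of relative Fano type over $X$, and the multigraded Rees algebra of any finite set of lc places of a $\bQ$-complement extracted on a $\bQ$-factorial model is already finitely generated by \cite{BCHM} --- the Koll\'ar conditions are not needed for that. If your reduction were valid, it would therefore prove finite generation of ${\rm Gr}_vR$ for every quasi-monomial lc place of a complement on such a model, which is false: in the situation of Example \ref{ex-conecubic} (from \cite[Section 6]{LXZ-HRFG}), the relevant divisorial data is finitely generated, yet ${\rm Gr}_{v_{s,t}}$ fails to be finitely generated for irrational $t/s$ outside the stated interval. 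So you have misplaced where the Koll\'ar hypothesis enters. The actual proof in \cite{XZ-HRFG} (following \cite{LXZ-HRFG}) is an induction on the number of divisors $E_1,\dots,E_p$: one degenerates $X$ to $X_1=\Spec({\rm Gr}_{E_1}R)$ (finitely generated since $E_1$ is divisorial), uses the Koll\'ar model conditions to show that $X_1$ is again klt and that $E_2,\dots,E_p$ degenerate to divisors forming a Koll\'ar model over $X_1$, identifies ${\rm Gr}_vR$ with ${\rm Gr}_{v_1}({\rm Gr}_{E_1}R)$ for the induced valuation $v_1$ of one lower rank, and concludes by induction. It is this preservation of the klt property under successive degenerations --- not relative Cox-ring finiteness --- that the ampleness and qdlt conditions are really for.
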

In fact, it is proved that $v$ is a Koll\'ar valuation if and only if the degeneration  of $(X,\Delta)$ to $X_0={\rm Spec}({\rm Gr}_vR)$ yields a klt pair. Fix a $\bQ$-complement $D$ such that $x$ is the only lc center. It is known that the set of lc places 
\[
{\rm LCP}(X,\Delta+D)=\left\{\mbox{nontrivial valuations }v \mbox{ such that }A_{X,\Delta}(v)=v(D)\right\} 
\]
is a cone over the dual complex $\cD(X,\Delta+D)$, which is a collapsible pseudo-manifold with boundary (see \cite{KK-lc,dFKX-dualcomplex}). Based on the discussion above, it is also worth looking at the set of  all Koll\'ar valuations ${\rm KV}(X,\Delta+D)\subseteq {\rm LCP}(X,\Delta+D)$, which is a cone over a space, denoted by $\cD^{\rm KV}(X,\Delta+D)$. In our note, we will identify a valuation with its non-zero rescaling, and in this sense we talk about a valuation in the dual complex. 

\begin{que}\label{que-kv}
Let $x\in (X,\Delta)$ be a klt singularity, and $D$ a $\bQ$-complement such that $x$ is the only lc center, i.e.  $D$  is an effective $\mathbb Q$-divisor such that $(X,\Delta+D)$ is klt outside $x$, and lc but not klt at $x$. How does $\cD^{\rm KV}(X,\Delta+D)\subseteq \cD(X,\Delta+D)$ look like?
\end{que}
In general, we know little about $\cD^{\rm KV}(X,\Delta+D)$. The following summarizes what has been proved.
\begin{thm}\label{thm-propertyofKollar}
We know
\begin{enumerate}
\item  (\cite{Xu-kollar}) $\cD^{\rm KV}(X,\Delta+D)\neq \emptyset$.
\item (\cite{LX-SDCII})  We fix a log resolution of $(X,\Delta+D)$, which yields a triangulation of $\cD(X,\Delta+D)$. If a point $x\in \cD^{\rm KV}(X,\Delta+D)$, then there exists a neighborhood $U$ of $x$ in the smallest affine linear subspace $V$ defined over $\bQ$  containing $x$, such that $U\subseteq  \cD^{\rm KV}(X,\Delta+D)$. 
\end{enumerate}
\end{thm}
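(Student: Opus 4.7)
For (1), the plan is to construct a divisorial Koll\'ar valuation directly. Since $\cD(X,\Delta+D)$ is nonempty, there exists a divisorial lc place $\ord_E$ of $(X,\Delta+D)$. I would extract it via the MMP of \cite{BCHM} to obtain a plt blowup $\mu\colon Y\to X$ with $-E$ $\mu$-ample and $(Y, E+\mu^{-1}_*\Delta)$ plt. Setting $D_Y := \mu^{-1}_*\Delta$, the standard discrepancy identity
\[
K_Y+E+D_Y \;=\; \mu^*(K_X+\Delta) + A_{X,\Delta}(\ord_E)\cdot E,
\]
combined with $A_{X,\Delta}(\ord_E)>0$ (since $(X,\Delta)$ is klt) and the $\mu$-ampleness of $-E$, yields both $K_Y+E+D_Y \ge \mu^*(K_X+\Delta)$ and the $\mu$-ampleness of $-(K_Y+E+D_Y)$. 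Hence $(Y, E+D_Y)$ is a Koll\'ar model and $\ord_E \in \cD^{\rm KV}(X,\Delta+D)$.

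For (2), my strategy is to exploit the observation that the Koll\'ar-model conditions are properties of the model itself and do not depend on the specific valuation in $\QM(Y,E)$ one singles out. Fix a Koll\'ar model $(Y, E+D_Y)$ witnessing that $x$ is Koll\'ar, with $E=\sum_{i=1}^p E_i$ and $x\in \QM(Y,E)$. Since conditions (1) and (2) in the definition involve only the triple $(\mu, Y, E+D_Y)$, the same model witnesses that \emph{every} $v'\in \QM(Y, E)$ is a Koll\'ar valuation, so it suffices to prove that a neighborhood of $x$ in $V$ sits inside $\QM(Y,E)$. I would pass to a common log resolution $W \to Y$ dominating the fixed log resolution of $(X,\Delta+D)$; then inside the simplex of the fixed triangulation containing $x$, the subset $\QM(Y,E)$ is identified with the $\bQ$-rational affine subcomplex cut out by imposing vanishing along those divisors of $W$ that do not dominate divisors of $E$ on $Y$. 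Its affine hull is a $\bQ$-rational affine subspace through $x$, so by minimality of $V$ we get $V \subseteq \mathrm{aff}(\QM(Y,E))$, and hence a full neighborhood of $x$ in $V$ lies in $\QM(Y, E) \subseteq \cD^{\rm KV}(X,\Delta+D)$.

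The main obstacle I anticipate is the combinatorial bookkeeping in (2): one must identify $\QM(Y,E)$ as a $\bQ$-rational subcomplex of the simplex on the fixed log resolution and check that $x$ lies in the relative interior of the appropriate face, so that the inclusion $V \subseteq \mathrm{aff}(\QM(Y,E))$ translates into an honest open neighborhood of $x$ in $V$. The plt-blowup input for (1) is by now standard, and once the identification in (2) is set up, the remaining argument is essentially formal.
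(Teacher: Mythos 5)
The paper offers no proof of this theorem---both parts are quoted from \cite{Xu-kollar} and \cite{LX-SDCII}---so I am judging your proposal on its own terms. Your argument for part (2) is essentially the right one and matches the standard reasoning: the conditions defining a Koll\'ar model depend on the valuation only through membership in $\QM(Y,E)$, the subcone of $\QM(Y,E)$ containing $x$ in its relative interior is a rational simplicial cone, so its affine hull is a rational affine subspace containing $x$ and hence contains the minimal one $V$, and a relative-interior point has a neighborhood in that affine hull inside the cone. One small point you should make explicit: you also need the nearby points $v'$ to remain lc places of $(X,\Delta+D)$ (i.e.\ to lie in $\cD(X,\Delta+D)$, not just to be Koll\'ar valuations in the abstract); this holds because $\cD(X,\Delta+D)$ is cut out of each simplex of the fixed log resolution by rational linear conditions, so it contains a neighborhood of $x$ in $V$ as well.

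Part (1), however, has a genuine gap. You take an \emph{arbitrary} divisorial lc place $\ord_E$ of $(X,\Delta+D)$ and assert that extracting it via \cite{BCHM} "obtains a plt blowup," i.e.\ that $(Y, E+\mu^{-1}_*\Delta)$ is plt. That is false in general: BCHM lets you extract $E$ with $-E$ ample over $X$, and your discrepancy computation correctly gives $K_Y+E+D_Y\ge \mu^*(K_X+\Delta)$ and the relative ampleness of $-(K_Y+E+D_Y)$, but pltness of the resulting pair is precisely the condition that $E$ be a Koll\'ar component, and most lc places of a given complement are not Koll\'ar components. This paper itself makes the failure explicit: Lemma \ref{lem-endkollar} only guarantees that \emph{one} endpoint of a one-dimensional $\cD(X,\Delta+D)$ is a Koll\'ar component, and Theorem \ref{thm-nghbKollar} exhibits the case where the other endpoint (and every interior rational point) fails to be one. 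The actual content of part (1) is that \emph{some} lc place of the given $D$ is a Koll\'ar valuation, and the known proofs produce it by a tie-breaking/perturbation argument (replace $D$ by $(1-\varepsilon)D$ plus a general auxiliary divisor, or run an MMP on a dlt modification to contract all but one lc place), combined with an ACC-type statement such as Lemma \ref{lem-ACC} to ensure the resulting Koll\'ar component is still an lc place of the original $(X,\Delta+D)$. You cannot prescribe in advance which lc place the construction lands on, which is exactly the step your plan skips.
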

One easily see the conclusion in (2) does not depend on the choice of the log resolution.  

Our first main result proves  the path connectedness of the locus of Koll\'ar valuations.

\begin{thm}\label{thm-connected} Let $x\in (X,\Delta)$ be a klt singularity, and let $D$ be a $\bQ$-complement such that $\{x\}$ is the only  lc center of $(X,\Delta+D)$. Then 
$\cD^{\rm KV}(X,\Delta+D)$ is path connected.
\end{thm}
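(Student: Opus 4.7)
Plan. Our strategy is to reduce to the case of divisorial Kollár valuations via Theorem \ref{thm-propertyofKollar}(2), then to construct, for any pair of divisorial Kollár valuations in $\cD^{\rm KV}(X,\Delta+D)$, a single Kollár model extracting both endpoints together with a connecting chain of lc places of $D$. Inside the dual complex of this Kollár model, which will sit inside $\cD^{\rm KV}(X,\Delta+D)$, we then write down an explicit PL path.

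Step 1 (Reduction to divisorial valuations). By Theorem \ref{thm-propertyofKollar}(2), each $v\in \cD^{\rm KV}(X,\Delta+D)$ has a neighborhood in its smallest rational affine subspace $V$ that is contained in $\cD^{\rm KV}$. Since $V$ is defined over $\bQ$, rational points of $V$ are dense in $V$, and they correspond, after rescaling, to divisorial valuations. Hence $v$ is connected by a short linear path inside $\cD^{\rm KV}\cap \cD(X,\Delta+D)$ to a divisorial Kollár valuation. We may therefore assume $v_1=\ord_{E_1}$ and $v_2=\ord_{E_2}$, with $E_1,E_2$ divisorial lc places of $(X,\Delta+D)$.

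Step 2 (Common Kollár model and path). Fix a dlt modification $\pi\colon (Z,F+G_Z)\to (X,\Delta+D)$ with $F=\sum_i F_i$ the reduced sum of divisorial lc places of $D$; in particular $E_1,E_2$ are components of $F$. Since $\cD(X,\Delta+D)$ is connected, pick a chain $E_1=F_{i_0},F_{i_1},\dots,F_{i_k}=E_2$ with consecutive pairs meeting in $Z$. Now run an appropriate MMP on $(Z,F+G_Z)$ over $X$ -- with a scaling or boundary perturbation chosen so as to preserve the selected chain -- to contract the excess components of $F$ and achieve the Kollár ampleness $-K_Y-E-D_Y$ ample over $X$; termination is by \cite{BCHM}. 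The output $\mu\colon(Y,E+D_Y)\to X$ is a Kollár model with $E\subseteq F$ still containing the chain. On this model, every $v\in {\rm QM}(Y,E)$ is itself a Kollár valuation (via the same $(Y,E+D_Y)$). Moreover, since every component of $E$ is an lc place of $D$, by linearity of the log discrepancy $A_{X,\Delta}$ and of $v(D)$ along the quasi-monomial strata, every $v\in {\rm QM}(Y,E)$ satisfies $v(D)=A_{X,\Delta}(v)$, i.e., is an lc place of $D$. Hence ${\rm QM}(Y,E)\subseteq \cD^{\rm KV}(X,\Delta+D)$, and the chain $F_{i_0},\dots,F_{i_k}\subseteq E$ yields the required PL path from $\ord_{E_1}$ to $\ord_{E_2}$.

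Main obstacle. The crux is the MMP construction in Step 2: we must simultaneously (i) retain the prescribed divisors $E_1,E_2$ together with a connecting chain, (ii) keep every surviving component as an lc place of $D$, and (iii) achieve the strict ampleness $-K_Y-E-D_Y$ ample over $X$. Condition (iii) is the subtle one, since on the dlt modification itself $-K_Z-F-G_Z\sim_\bQ 0$ over $X$, so the strict ampleness can only be created by the contractions carried out during the MMP. Designing the MMP (a careful choice of scaling divisor) so that it produces the required ampleness without disturbing the chain, and invoking the klt hypothesis on $(X,\Delta)$ in an essential way, is the technical heart of the proof.
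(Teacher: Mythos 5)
Your Step 1 matches the paper's opening move (using Theorem \ref{thm-propertyofKollar}(2) to slide each Koll\'ar valuation to a nearby divisorial one), but Step 2 contains a genuine gap, and it is precisely the step you yourself flag as the ``technical heart'': you never construct the common Koll\'ar model, and the construction you sketch cannot work as stated. The problem is that a chain $F_{i_0},\dots,F_{i_k}$ of divisorial lc places connecting $E_1$ to $E_2$ on a dlt modification of $(X,\Delta+D)$ has no reason to consist of Koll\'ar valuations. By definition, if $(Y,E+D_Y)$ is a genuine Koll\'ar model then \emph{every} $v\in\QM(Y,E)$ is a Koll\'ar valuation; so producing a Koll\'ar model whose boundary contains your whole chain would force every $F_{i_j}$, and every segment between consecutive ones, to lie in $\cD^{\rm KV}(X,\Delta+D)$. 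But Example \ref{ex-conecubic} shows that $\cD^{\rm KV}$ can be a proper (and not even locally closed) subset of the dual complex: there are divisorial lc places $v_{s,t}$ with $t/s$ outside the interval $\left(\frac{7-3\sqrt{5}}{2},\frac{7+3\sqrt{5}}{2}\right)$ that are not Koll\'ar. An MMP ``chosen so as to preserve the selected chain'' therefore either fails to reach the required ampleness or must contract some $F_{i_j}$ in the middle of the chain, breaking the path. To salvage your strategy you would need to show that the chain can always be chosen \emph{inside} $\cD^{\rm KV}$, which is essentially the statement you are trying to prove.

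The paper's actual argument is entirely different and avoids any model surgery: after the reduction to Koll\'ar components $E_0,E_1$, it attaches to each a $\bQ$-complement $H_i$ with $E_i$ its unique lc place (after an ACC perturbation, Lemma \ref{lem-ACC}), interpolates the boundaries $\oH_t=(1-t)(1-\varepsilon_0)H_0+t(1-\varepsilon_1)H_1$, and takes $v_t$ to be the unique normalized volume minimizer of $\hvol_{X,\Delta+\oH_t}$. The stable degeneration theorems guarantee each $v_t$ is a Koll\'ar valuation lying in $\cD(X,\Delta+D)$, and continuity of $t\mapsto v_t$ follows from uniqueness of minimizers (Lemma \ref{lem-continuousminimizer}). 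If you want a proof along your lines, you would need a substitute for this input; as written, Step 2 is an unproved assertion that is false in the generality in which you invoke it.
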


We make the following conjectures.

\begin{conj}\label{conj-kollar valuations}
We conjecture $\cD^{\rm KV}(X,\Delta+D)$ satisfies that
\begin{enumerate}
\item (Local closedness, weak version) There is a rational triangulation of  $\cD(X,\Delta+D)$ such that for each open simplex $C^{\circ}$, $\cD^{\rm KV}(X,\Delta+D)\cap C^{\circ}$ is open in its closure $\overline{\cD^{\rm KV}(X,\Delta+D)\cap C^{\circ}}\subset C^{\circ}$.
\item[(1')] (Local closedness, strong version) There is a rational triangulation of  $\cD(X,\Delta+D)$ such that for each open simplex $C^{\circ}$, $\cD^{\rm KV}(X,\Delta+D)\cap C^{\circ} \subset C^{\circ}$ is open.
\item (Finiteness, weak version) If $\cD(X,\Delta+D)=\cD^{\rm KV}(X,\Delta+D)$, i.e. $\cD(X,\Delta+D)$ consists of Koll\'ar valuations, then there is a finite triangulation of $\cD(X,\Delta+D)$, such that each simplex can be realized on a Koll\'ar model. 
\item[(2')] (Finiteness, strong version) If $C\subseteq \cD^{\rm KV}(X,\Delta+D)$ is a closed cell of  $\cD(X,\Delta+D)$ after a triangulation given by a log resolution, then there is a finite triangulation of $C=\cup^N_{i=1} C_i$, such that each $C_i$ is realized in a Koll\'ar model. 
 \end{enumerate}
\end{conj}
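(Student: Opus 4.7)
The conjecture splits naturally into local closedness (1)/(1') and finiteness (2)/(2'); my plan addresses the two strands using a common pair of inputs, namely the characterization of Koll\'ar valuations by klt degeneration $X \rightsquigarrow \Spec(\Gr_v R)$ from \cite{XZ-HRFG} and the local openness along rational affine subspaces in Theorem \ref{thm-propertyofKollar}(2).

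For local closedness (1'), I would start from the triangulation associated to a log resolution and then subdivide rationally. Given an open cell $C^{\circ}$ and a Koll\'ar valuation $v \in C^{\circ} \cap \cD^{\rm KV}$, Theorem \ref{thm-propertyofKollar}(2) provides a minimal rational affine subspace $V_v$ through $v$ and an open neighborhood of $v$ in $V_v$ consisting of Koll\'ar valuations. The hope is that only finitely many subspaces $V_v$ arise as $v$ ranges over $C^{\circ}$, so that iteratively subdividing $C$ along all such $V_v$ terminates and each resulting open simplex lies either entirely inside or entirely outside $\cD^{\rm KV}$. The weaker form (1) should be easier to attack: within a single rational affine slice $V$, local openness guarantees that $V \cap \cD^{\rm KV}$ is open in $V$, and one can argue that the closure of $\cD^{\rm KV} \cap C^{\circ}$ is itself a finite union of such rational slices, so that openness inside the closure is automatic.

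For finiteness (2'), I would view a Koll\'ar model $(Y,E+D)$ as encoded by the finite set of divisors $E_1,\dots,E_p$ it extracts, each of which is an lc place of $(X,\Delta+D)$. The local openness statement says that each such model realizes an open polyhedral region of valuations inside $\QM(Y,E)\cap \cD^{\rm KV}$, so a closed cell $C \subseteq \cD^{\rm KV}$ is covered by these regions. The central step is to extract a \emph{finite} subcover: the natural approach is to combine boundedness of lc places of a fixed $\bQ$-complement (together with nearby complements obtained by perturbation) with a compactness argument on $C$, then take a common birational refinement of the finitely many chosen Koll\'ar models via BCHM. Part (2) then follows by applying (2') cell by cell, while the weak form of (2) should already follow from the above plus Theorem \ref{thm-connected}, which rules out disconnected components.

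The principal obstacle is controlling the combinatorics of which divisors get extracted as $v$ varies in a cell. Even at a single Koll\'ar valuation different Koll\'ar models can realize it (extracting different sets of divisors), and both the subspaces $V_v$ used in (1') and the polyhedral regions $\QM(Y,E)\cap \cD^{\rm KV}$ used in (2') depend sensitively on these choices. Showing that only finitely many combinatorial patterns appear over a compact cell---either directly via boundedness of complements, or via a uniform MMP for the pairs $(Y,E+D)$ as $(E,D)$ varies in a bounded family---is where the essential work lies. Absent such a uniform finiteness input, one has to worry about accumulation of Koll\'ar models whose extracted divisor sets do not stabilize, which would block both the local closedness and the finite-triangulation conclusions; in my view this is the bottleneck that makes the conjecture genuinely open rather than a direct corollary of Theorems \ref{thm-propertyofKollar} and \ref{thm-connected}.
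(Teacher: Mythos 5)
The statement you are addressing is Conjecture \ref{conj-kollar valuations}; the paper does not prove it and explicitly leaves it open, so there is no proof of yours to measure against an argument in the text. Your proposal correctly treats the conjecture as genuinely open and identifies a real bottleneck (the lack of uniform control on which divisors are extracted by Koll\'ar models as the valuation varies over a compact cell); this is consistent with the paper, whose only supporting evidence is the one-dimensional case (Theorem \ref{thm-nghbKollar}) and the verification in Example \ref{ex-conecubic}.

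That said, your concrete strategy for (1) and (1') aims at something strictly stronger than what is conjectured, and that stronger statement is refuted by the paper's own Example \ref{ex-conecubic}. There, inside a two-dimensional cell, $\cD^{\rm KV}(\bA^3,D')$ is cut out by the condition $t/s\in \left(\frac{7-3\sqrt{5}}{2},\frac{7+3\sqrt{5}}{2}\right)$, whose boundary rays have irrational slope; consequently no rational triangulation can be refined so that ``each resulting open simplex lies either entirely inside or entirely outside $\cD^{\rm KV}$,'' and the closure of $\cD^{\rm KV}\cap C^{\circ}$ is not a finite union of rational affine slices. This is precisely why the conjecture is phrased as relative openness of $\cD^{\rm KV}\cap C^{\circ}$ in $C^{\circ}$ (or in its closure) rather than as a decomposition into rational simplices. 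Relatedly, your reading of Theorem \ref{thm-propertyofKollar}(2) as ``within a single rational affine slice $V$, the set $V\cap\cD^{\rm KV}$ is open in $V$'' overstates it: the theorem gives an open neighborhood only inside the \emph{minimal} rational affine subspace through the given point, and these subspaces vary from point to point --- in the example $v_{0,0}=\ord_o$ is a Koll\'ar valuation while nearby rational points on the adjacent edges are not. So the local-closedness strand of your plan needs to be reorganized around the actual, weaker openness input, and the finiteness strand still rests on the unproved uniform boundedness you yourself flag; neither strand constitutes a proof, which is as expected for a statement the paper presents as a conjecture.
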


See Example \ref{ex-conecubic} for sharpness of our formulation. 

In Section \ref{s-dual complex dim 1}, we study this conjecture when the dual complex is one dimensional, and completely address the question in this case.

\begin{thm}\label{thm-nghbKollar}
Assume $\cD(X,\Delta+D)$ is homeomorphic to $[0,1]$. We denote by $v_t$ the valuation corresponding to $t\in [0,1]$. The set of Koll\'ar valuations $\cD^{\rm KV}(X,\Delta+D)$ is
\begin{enumerate}
\item either precisely one of $E_0$ or $E_1$; 
\item or $v_t$ for all $t\in [0,1]$.
\end{enumerate}
\end{thm}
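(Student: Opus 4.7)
My plan is to combine the structural results in the excerpt with a careful analysis of Koll\'ar models in the one-dimensional setting. By Theorem~\ref{thm-propertyofKollar}(1), $\cD^{\rm KV}:=\cD^{\rm KV}(X,\Delta+D)$ is nonempty, and by Theorem~\ref{thm-connected} it is path-connected. A path-connected nonempty subset of $[0,1]$ is either a single point or an interval, and any continuous path from $v_0$ to $v_1$ inside $[0,1]$ is surjective by the intermediate value theorem. Consequently, if $\cD^{\rm KV}$ contains both endpoints it equals $[0,1]$, giving case~(2), so the remaining task is to rule out the possibility that $\cD^{\rm KV}$ has a boundary point strictly inside $(0,1)$.

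A first reduction uses Theorem~\ref{thm-propertyofKollar}(2). If $v_{t_0}\in\cD^{\rm KV}$ with $t_0\in(0,1)$ irrational, then $v_{t_0}$ has rational rank $2$, so the smallest $\bQ$-affine subspace containing it is two-dimensional; its intersection with $\cD(X,\Delta+D)\simeq[0,1]$ is therefore an open $[0,1]$-neighborhood of $t_0$, and Theorem~\ref{thm-propertyofKollar}(2) places that whole neighborhood in $\cD^{\rm KV}$. Hence every boundary point of $\cD^{\rm KV}$ lying in $(0,1)$ must correspond to a divisorial (rational-parameter) valuation.

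The central step is to rule out such interior divisorial boundary points. Suppose $v_{t^*}$ is Koll\'ar with $t^*\in(0,1)$ rational, and that some sequence $t_n\downarrow t^*$ has $v_{t_n}\notin\cD^{\rm KV}$. Starting from a Koll\'ar model $\mu\colon(Y,E+F)\to X$ witnessing that $v_{t^*}$ is Koll\'ar, I would examine the divisorial lc places of $(X,\Delta+D)$ adjacent to $v_{t^*}$ in $\cD$. Using the q-dlt property and the $\mu$-anti-ampleness of $-(K_Y+E+F)$, and running a $(K_Y+E+F)$-MMP over $X$ with a suitable scaling, I would produce a new Koll\'ar model whose extracted divisors contain $v_{t^*}$ together with a divisorial lc place lying strictly to the right of $t^*$ in $\cD$. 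The QM cone of the enlarged model then covers a $[0,1]$-neighborhood of $t^*$ on the right, contradicting $v_{t_n}\notin\cD^{\rm KV}$. The symmetric argument rules out left-boundary divisorial points.

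Putting everything together, $\cD^{\rm KV}$ has no boundary point strictly inside $(0,1)$. Being a nonempty path-connected subset of $[0,1]$ whose topological boundary lies in $\{0,1\}$, it is one of $\{v_0\}$, $\{v_1\}$, $\{v_0,v_1\}$, or $[0,1]$; path-connectedness (Theorem~\ref{thm-connected}) rules out the disconnected pair $\{v_0,v_1\}$, giving the three cases in the statement. The main obstacle I foresee is the MMP construction in the third step: producing a q-dlt birational model whose extracted divisors strictly enlarge those of a given Koll\'ar model across an interior divisorial parameter, while preserving both the q-dlt structure and the $\mu$-anti-ampleness of $-(K_Y+E+F)$, is delicate, and is precisely where the one-dimensionality of $\cD$ is used in an essential way.
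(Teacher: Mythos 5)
There is a genuine gap, on two levels. First, the step you yourself flag as the ``main obstacle'' --- producing, from a Koll\'ar model for an interior divisorial $v_{t^*}$, an enlarged q-dlt anti-ample model covering a one-sided neighborhood --- is not carried out, and it is precisely where all the content lies; a plan to run an MMP ``with a suitable scaling'' is not a proof. Second, and more seriously, even if that step were completed it would only establish that $\cD^{\rm KV}\cap(0,1)$ is \emph{open} in $(0,1)$, i.e.\ that no boundary point of $\cD^{\rm KV}$ belongs to $\cD^{\rm KV}\cap(0,1)$. It would \emph{not} show that $\cD^{\rm KV}$ has no boundary point in $(0,1)$: sets such as $[0,\tfrac12)$ or $(a,b)$ are path-connected, nonempty, and open in the interior, yet have boundary points inside $(0,1)$ and are excluded by the theorem. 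Your final enumeration (``$\{v_0\}$, $\{v_1\}$, $\{v_0,v_1\}$, or $[0,1]$'') silently omits exactly these half-open and open intervals, so the dichotomy does not follow from openness plus connectedness plus nonemptiness. What is needed is a \emph{closedness}-type statement in the opposite direction, and this is what the paper proves: if an endpoint, say $E_1$, is \emph{not} a Koll\'ar component, then \emph{no} $E_s$ with $s\in(0,1]\cap\bQ$ is a Koll\'ar component. The argument extracts $E_1$ and $E_s$ together, shows $(Y_{1s},E_1+E_s+\mu_{1s*}^{-1}\Delta)$ has two disjoint lc centers $W_0,W_1$ properly contained in $E_s$, contracts $E_1$ by an MMP isomorphic outside $E_1$, and concludes via the negativity lemma that the plt blow-up of $E_s$ is not plt. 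Combined with the fact that at least one endpoint is always a Koll\'ar component (Lemma~\ref{lem-endkollar}, proved by comparing bigness of the restrictions of $-(K+E_0+E_1+\Delta)$ to $E_0$ and $E_1$ on two crepant models) and with Proposition~\ref{prop-nghbKollar} (two Koll\'ar endpoints admit a joint Koll\'ar model, whence all of $[0,1]$ is Koll\'ar), this yields the theorem. None of these three ingredients appears in your proposal, and your reduction to ``no interior boundary point'' cannot be salvaged without one of them.

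A smaller inaccuracy: for irrational $t_0$ the smallest $\bQ$-affine subspace containing $v_{t_0}$ is the line spanned by the edge of the triangulation (one-dimensional, not two-dimensional); your conclusion from Theorem~\ref{thm-propertyofKollar}(2) is nevertheless correct for those points.
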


\medskip

\noindent{\bf Notation and Conventions}: We follow the notation and conventions in \cites{KollarMori, Kollar13, Xu-Kbook}. By a singularity $x\in X$, we mean $X={\rm Spec}(R)$ for a local ring $R$ which is essentially of finite type, $x$ is the closed point. If $v$ is a valuation on $K(X)$, whose center on $X$ is $x$, then we denote by ${\rm Gr}_vR$ the associated graded ring, i.e. 
${\rm Gr}_vR=\bigoplus_{\lambda \in \bR}\fa_{\ge \lambda}/\fa_{>\lambda}$, where 
\[
\fa_{\ge \lambda} (\mbox{resp. } \fa_{> \lambda})=\left\{f\in R\mid v(f) \ge  (\mbox{resp. } >)\lambda \right\} \, .
\]
When $v=\ord_E$ for some divisor $E$ over $X$, we also write ${\rm Gr}_ER$ for ${\rm Gr}_{\ord_E}R$. Let $(Y,E)$ be a log smooth model or more generally a toroidal model such that components of $E$ are $\bQ$-Cartier, we denote by $\QM(Y,E)$ the set of quasi-monomial valuations (see e.g. \cite[Definition 2.8]{XZ-HRFG}). A qdlt pair is defined as \cite[Definition 35]{dFKX-dualcomplex}.
\medskip

\noindent{\bf Acknowledgement}: We want to thank Harold Blum and Ziquan Zhuang for many discussions. We are also grateful to the referees for helpful comments. YL is partially supported by  NSF CAREER Grant DMS-2237139 and the Alfred P. Sloan Foundation. CX is partially supported by Simons Investigator and NSF Grant DMS-2201349.
\section{Connectedness}

In this section, we aim to prove Theorem \ref{thm-connected}.

\begin{lem}\label{lem-ACC}
Fix  a klt singularity $x\in (X,\Delta)$ and a $\bQ$-complement $D$ such that $(X,\Delta+D)$ is klt outside $x$. There exists a positive $\varepsilon$ depending only on $\dim(X)$, ${\rm Coeff}(\Delta)$ and ${\rm Coeff}(D)$, such that any lc place $E$ of a lc pair $(X,\Delta+(1-\varepsilon)D+G)$ for an effective $\bQ$-divisor $G$  belongs to $\cD(X,\Delta+D)$.
\end{lem}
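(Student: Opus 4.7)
The plan is to argue by contradiction using the ACC of log canonical thresholds (Hacon--McKernan--Xu). Suppose no such $\varepsilon$ exists; then we would obtain sequences $\varepsilon_i \downarrow 0^+$, effective $\bQ$-divisors $G_i$, and valuations $E_i$ such that each $E_i$ is an lc place of $(X,\Delta+(1-\varepsilon_i)D+G_i)$ yet $E_i \notin \cD(X,\Delta+D)$, i.e.\ $A_{X,\Delta+D}(E_i) > 0$. The lc-place identity can then be rewritten as
\[
\ord_{E_i}(G_i) \;=\; A_{X,\Delta+D}(E_i) + \varepsilon_i\,\ord_{E_i}(D),
\]
or equivalently $\lct(X,\Delta+G_i;\,D) = 1-\varepsilon_i$ with $E_i$ realizing the threshold. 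If the coefficients of $\Delta+G_i$ were in a fixed DCC set, the strictly increasing sequence $1-\varepsilon_i \uparrow 1$ would immediately contradict ACC of lct.

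To prepare for this application, I would first pass to a $\bQ$-factorial dlt modification $\pi\colon (Y, E_Y+\widetilde{\Delta}+\widetilde{D}) \to (X,\Delta+D)$, where $E_Y = \sum_j E_j$ is the reduced $\pi$-exceptional divisor. Each $E_j$ is an lc place of $(X,\Delta+D)$, and every quasi-monomial valuation on $(Y, E_Y)$ automatically belongs to $\cD(X,\Delta+D)$. Pulling back the perturbed pair, the lc condition forces $\ord_{E_j}(G_i) \le \varepsilon_i\,\ord_{E_j}(D)$ for every $j$, with equality precisely when $E_j$ is an lc place of the perturbed pair. This reduces the problem to the case in which $E_i$ is obtained by further blow-up of $Y$, so $E_i$ is non-quasi-monomial on $(Y, E_Y)$ and $\delta_i := A_{X,\Delta+D}(E_i) > 0$ is its non-QM deficit.

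The remaining ingredient is DCC control on the coefficients of $G_i$. For this I would invoke Birkar's boundedness of complements for the klt pair $(X,\Delta)$, together with the pointwise bound $\ord_{E_j}(G_i) \le \varepsilon_i\,\ord_{E_j}(D)$ on the dlt model, to replace $G_i$ with an effective $\bQ$-divisor $G_i'$ whose coefficients lie in a fixed DCC set depending only on $\dim X$, $\mathrm{Coeff}(\Delta)$, and $\mathrm{Coeff}(D)$, and for which $E_i$ remains an lc place of $(X, \Delta + (1-\varepsilon_i)D + G_i')$. ACC of lct then applies to the modified sequence $\lct(X,\Delta+G_i';\,D) = 1-\varepsilon_i \uparrow 1$ and produces the desired contradiction.

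The hard part will be this reduction to bounded-denominator $G_i'$: one must construct $G_i'$ so that the lc-place property at $E_i$ and the global lc-ness of the pair are both retained, while the coefficients are forced into a finite (hence DCC) set. It is precisely this step that requires $\varepsilon$ to depend on $\mathrm{Coeff}(D)$, not merely on $\dim X$ and $\mathrm{Coeff}(\Delta)$, and that genuinely draws on the boundedness of complements.
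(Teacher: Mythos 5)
Your high-level strategy (argue by contradiction and exhibit log canonical thresholds in $[1-\varepsilon_i,1)$ with controlled coefficient data, contradicting ACC of lct) is in the right family --- the paper itself simply cites \cite{HMX-ACC} together with [Zhu22, Lemma 5.5]. But the step you defer as ``the hard part'' is not merely hard: it is impossible as stated, and the impossibility is visible from your own setup. You propose to replace $G_i$ by a $G_i'$ with coefficients in a fixed DCC set so that $E_i$ remains an lc place of $(X,\Delta+(1-\varepsilon_i)D+G_i')$, and then read off $\lct(X,\Delta+G_i';D)=1-\varepsilon_i$. The problem is that this identity holds whenever the lc place satisfies $\ord_{E_i}(D)>0$, \emph{regardless} of whether $E_i\in\cD(X,\Delta+D)$. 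In particular, in the ``good'' case $E_i\in\cD(X,\Delta+D)$ one has $\ord_{E_i}(G_i)=\varepsilon_i\,\ord_{E_i}(D)\to 0$, and such configurations certainly exist for every small $\varepsilon_i$ (e.g.\ $G_i=\varepsilon_i D$). If your reduction could always be performed, ACC would then forbid these too --- so it cannot be performed; concretely, DCC coefficients are bounded below by some $\delta_0>0$, while $\ord_{E_i}(G_i')$ must equal $A_{X,\Delta}(E_i)-(1-\varepsilon_i)\ord_{E_i}(D)$, which tends to $0$ in the good case, forcing coefficients below $\delta_0$. Any correct argument must therefore use the hypothesis $A_{X,\Delta+D}(E_i)>0$ in an essential way that your reduction (which is agnostic to good versus bad $E_i$) does not, and boundedness of complements will not rescue it.

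The standard fix is to compute the threshold not on $X$ but on the extraction of $E$ itself, which makes $G$ disappear entirely. Let $\mu\colon Y\to X$ be a $\bQ$-factorial extraction of the single divisor $E$ (assumed centered at $x$, which is implicitly needed anyway to guarantee $\ord_E(D)>0$). Since $E$ is an lc place of $(X,\Delta+(1-\varepsilon)D+G)$, the pair $(Y,E+\mu_*^{-1}(\Delta+(1-\varepsilon)D+G))$ is crepant to it, hence lc, and discarding the effective divisor $\mu_*^{-1}G$ gives that $(Y,E+\mu_*^{-1}(\Delta+(1-\varepsilon)D))$ is lc; thus $c:=\lct(Y,E+\mu_*^{-1}\Delta;\mu_*^{-1}D)\ge 1-\varepsilon$. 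On the other hand, if $a:=A_{X,\Delta+D}(E)>0$, then $K_Y+E+\mu_*^{-1}(\Delta+D)=\mu^*(K_X+\Delta+D)+aE$, so any lc place $F$ of $(X,\Delta+D)$ (whose center on $Y$ lies in $E=\mu^{-1}(x)$) has $A_{Y,E+\mu_*^{-1}(\Delta+D)}(F)=-a\,\ord_F(E)<0$, whence $c<1$. Now the boundary $E+\mu_*^{-1}\Delta$ and the divisor $\mu_*^{-1}D$ have coefficients in the fixed sets $\{1\}\cup{\rm Coeff}(\Delta)$ and ${\rm Coeff}(D)$, so ACC of lct excludes $c\in[1-\varepsilon,1)$ once $\varepsilon$ is small in terms of $\dim X$ and these coefficient sets. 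This is the content behind the citation; your dlt-model observations are fine but are not the place where ACC gets applied.
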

\begin{proof}
This follows directly from \cite[Lemma 5.5]{Zhu22} by applying ACC of log canonical thresholds \cite{HMX-ACC} (see also \cite[Proof of Proposition 6.9]{LLX-Tiansurvey}). 
\end{proof}
\begin{proof}[Proof of Theorem \ref{thm-connected}]
Replacing $x\in (X,\Delta)$ and $D$ by $x\in (X,\Delta+(1-\varepsilon)D)$ and $\varepsilon D$ respectively  as in Lemma \ref{lem-ACC}, we may assume that lc places of any $\bQ$-complement of $(X, \Delta)$ are contained in $\cD(X,\Delta+D)$.
Using Theorem \ref{thm-propertyofKollar}, we can replace two Koll\'ar valuations by nearby Koll\'ar components. Thus we may assume two valuations are Koll\'ar components $E_0$ and $E_1$. Let $\mu_i\colon Y_i\to X$ $(i=0,1)$ be the model extracting the Koll\'ar component $E_i$, and $H_i$ the pushforward of a general effective $\bQ$-divisor in $|-K_{Y_i}-E_i-\mu^{-1}_{i*}\Delta|_{\bQ}$. So $H_i$ is a $\bQ$-complement such that $E_i$ are the only lc places of $(X,\Delta+H_i)$. Applying Lemma \ref{lem-ACC} to $E_i$, there exists  $\varepsilon_i $ such that $E_i$ are the only lc places of any $\bQ$-complement of $(X,\Delta+(1-\varepsilon_i)H_i)$. In particular, $E_i$ is the minimizer of $\hvol_{X,\Delta+(1-\varepsilon_i)H_i}$.

For a real number $t\in [0,1]$, define $\oH_t:= (1-t)(1-\varepsilon_0)H_0+t(1-\varepsilon_1)H_1$. Thus $x\in (X, \Delta +\oH_t)$ is a klt singularity with $\bR$-boundary divisors.
Let $v_t$ be a minimizer of $\hvol_{X,\Delta+\oH_t}(\cdot)$ such that $A_{X,\Delta}(v_t) = 1$, whose existence and uniqueness follows from  \cite{Blu-existence, Xu-quasimonomial, XZ-unique} and their generalization to $\bR$-divisors \cite[Theorems 3.3 and 3.4]{HLQ}. Moreover, by \cite{XZ-HRFG} and its generalization to $\bR$-divisors \cite[Theorem 2.19]{Zhu23}, we know that $v_t$ is a Koll\'ar valuation over $x\in (X, \Delta+\oH_t)$ (hence over $x\in (X,\Delta)$). By \cite[Theorem 2.20]{HLQ} (see also \cite[Theorem 1.3]{LX-SDCI}),  there exists a sequence of Koll\'ar components $(S_{t,j})_{j\in \bN}$ over $x\in (X, \Delta+\oH_t)$ such that 
$v_t = \lim_{j\to \infty} \frac{\ord_{S_{t,j}}}{A_{X,\Delta}(S_{t,j})}$. Hence $S_{t,j}$ is a Koll\'ar component over $x\in (X, \Delta)$ which implies that it is an lc place of a $\bQ$-complement of $(X,\Delta)$. By Lemma \ref{lem-ACC} we know that $S_{t,j} \in \cD(X,\Delta+ D)$, which implies that their rescaled limit $v_t\in \cD(X,\Delta + D)$. 
Moreover, since 
$(v,t)\mapsto\hvol_{X,\Delta+\oH_t}(v)$ 
is a continuous function on $\cD(X,\Delta+D)\times [0,1]$,
the function 
\[
[0,1]\to \cD(X,\Delta+D), \ \ \ t\mapsto v_t
\]
is continuous by Lemma \ref{lem-continuousminimizer}.
\end{proof}
\begin{lem}\label{lem-continuousminimizer}
Let $f:W\times [0,1]\to \bR$ be a continuous function where $W$ is a compact topological space. Assume that for every $t\in [0,1]$, there is a unique minimizer $v_t\in W$ of the function $f_t$ which is the restriction of $f$ on $W\times t$. Then $t\mapsto v_t$ is a continuous function from $[0,1]$ to $W$. 
\end{lem}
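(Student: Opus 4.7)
The plan is a standard Berge-style compactness argument: the unique minimizer of a continuous family of functions over a fixed compact parameter space depends continuously on the external parameter. I would argue by contradiction. Suppose $t\mapsto v_t$ fails to be continuous at some $t_0\in[0,1]$. Then there exist an open neighborhood $U$ of $v_{t_0}$ in $W$ and a net (a sequence, if $W$ is first countable) $t_\alpha\to t_0$ in $[0,1]$ with $v_{t_\alpha}\notin U$ for every $\alpha$.

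Since $W\setminus U$ is closed in the compact space $W$, it is itself compact, so the net $(v_{t_\alpha})$ admits a convergent subnet $v_{t_{\alpha_\beta}}\to w$ with limit $w\in W\setminus U$; in particular $w\neq v_{t_0}$. I would then show that $w$ is a minimizer of $f_{t_0}$. Indeed, for any fixed $u\in W$, the defining property of $v_{t_{\alpha_\beta}}$ gives
\[
f(v_{t_{\alpha_\beta}}, t_{\alpha_\beta})\ \le\ f(u, t_{\alpha_\beta}),
\]
and the joint continuity of $f$ on $W\times[0,1]$ lets us pass to the limit along the subnet to obtain $f(w,t_0)\le f(u,t_0)$. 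Since $u$ was arbitrary, $w$ minimizes $f_{t_0}$, contradicting the uniqueness assumption that $v_{t_0}$ is the unique minimizer.

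The only thing requiring a moment of care is the extraction of the limit point, since $W$ is only assumed to be a compact topological space (not a priori metrizable). One either works with nets and subnets as above, or notes that in the application to Koll\'ar valuations $W$ is a subset of a finite-dimensional simplex and is therefore metrizable, so plain sequential compactness already suffices. No other subtleties arise; the lemma is essentially a one-step consequence of compactness plus uniqueness.
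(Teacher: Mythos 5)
Your argument is correct. The contradiction via compactness is sound: the inequality $f(v_{t_{\alpha_\beta}}, t_{\alpha_\beta})\le f(u,t_{\alpha_\beta})$ passes to the limit using only the joint continuity of $f$ and convergence of the subnet in the product topology, and the limit point $w$ lies in $W\setminus U$, hence differs from $v_{t_0}$, contradicting uniqueness. Your caveat about nets versus sequences is exactly the right one for a general compact $W$, and is indeed harmless in the application, where $W$ is a compact subset of a finite simplicial complex and therefore metrizable. The paper organizes the same underlying mechanism differently: it introduces the set $\cU=\{(x,t)\in W\times[0,1] \mid f(x,t)>f(y,t)\text{ for some }y\in W\}$ of non-minimizing pairs, shows by an explicit $\varepsilon$-argument that $\cU$ is open, observes that its complement is precisely the graph of $t\mapsto v_t$ by the uniqueness hypothesis, and then invokes the closed graph theorem for maps into a compact space. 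Your proof effectively inlines both the verification that the graph is closed and the proof of that closed graph theorem into a single subnet extraction. The paper's route isolates a reusable topological statement and avoids nets at the cost of citing the closed graph theorem; yours is self-contained and somewhat more elementary. Both are complete.
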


\begin{proof}
Consider the subset $\cU\subset W\times [0,1]$ defined by 
\[
\cU:=\{(x,t)\in W\times [0,1]\mid \textrm{there exists } y\in W\textrm{ such that }f(x,t)>f(y,t)\}.
\]
We claim that $\cU$ is open in $W\times [0,1]$. Let $(x,t)\in \cU$ be an arbitrary point. Then there exists $y\in W$ such that $f(x,t)>f(y,t)$. Since $f$ is continuous, for $\varepsilon=\frac{f(x,t)-f(y,t)}{2}>0$ there exists open neighborhoods $x\in U_x\subset W$, $y\in U_y\subset W$, and $t\in U_t\subset [0,1]$, such that $f(x',t') > f(x,t) - \varepsilon$ and $f(y',t')<f(y,t)+\epsilon$ for any $x'\in U_x$, $y'\in U_y$ and $t'\in U_t$. In particular, 
\[
f(x',t') - f(y',t') > f(x,t) - f(y,t) -2\varepsilon = 0.
\]
Thus $(x',t') \in \cU$ for any $(x',t')\in U_x\times U_t$, which implies that $\cU$ is open.

Finally, by assumption we know that $(W\times [0,1]) \setminus \cU$ is precisely the graph of the function $\sigma: [0,1]\to W$ where $\sigma(t)= v_t$. Since $\cU$ is open, the graph of $\sigma$ is closed. Thus $\sigma$ is continuous  by the closed graph theorem as $W$ is compact.
\end{proof}

\section{One dimensional dual complex}\label{s-dual complex dim 1}

\begin{say}
Let $x\in (X,\Delta)$ be a klt singularity, and $D$ a $\bQ$-complement such that $(X,\Delta+D)$ is klt outside $x$. For any collection of lc places $E_{t_1},\dots, E_{t_i}$ corresponding to points $t_1,\dots,t_i\in \cD(X,\Delta+D)$, by \cite[Corollary 1.4.3]{BCHM}, there exists a model 
\[
\mu_{t_1t_2\cdots t_i}\colon Y_{t_1t_2\cdots t_i}\to X
\]
which precisely extracts $E_{t_1},\dots, E_{t_i}$. Moreover, by running a $(\mu_{t_1t_2\cdots t_i*}^{-1}D)$-MMP over $X$, we may assume $-K_{Y_{t_1t_2\cdots t_i}}-\mu_{t_1t_2\cdots t_i*}(\Delta)-\sum^i_{j=1} E_{t_j}$ is nef, as the MMP sequence only has flips.

Such $Y_{t_1t_2\cdots t_i}$ is not unique, but any two models $Y_{t_1t_2\cdots t_i}$ and $Y'_{t_1t_2\cdots t_i}$ are crepant birationally equivalent. In particular, the notion of bigness of the restriction of $-K_{Y_{t_1t_2\cdots t_i}}-\mu_{t_1t_2\cdots t_i*}(\Delta)-\sum^i_{j=1} E_{t_j}$ on $E_{t_j}$ is well defined for any $1\le j\le i$.

Therefore, for any $t\in \cD(X,\Delta+D)$, $E_t$ is a \emph{Koll\'ar component}  if  $(Y_{t}, \mu_{t*}(\Delta)+E_t)$ is plt.; and $E_{t_1},\dots, E_{t_i}$ admits a Koll\'ar model, if there exists $Y_{t_1t_2\cdots t_i}$ such that the pair  $({Y_{t_1t_2\cdots t_i}}, \mu_{t_1t_2\cdots t_i*}(\Delta)+\sum^i_{j=1} E_{t_j})$ is qdlt and $-K_{Y_{t_1t_2\cdots t_i}}-\mu_{t_1t_2\cdots t_i*}(\Delta)-\sum^i_{j=1} E_{t_j}$ is ample. 

\end{say}
We will study the case that when $\cD(X,\Delta+D)$ is homeomorphic to a one dimensional interval $[0,1]$. For any $t\in [0,1]$, up to rescaling, it corresponds to a valuation $v_t$, and for $t\in \mathbb{Q}$, it corresponds to a divisorial valuation $E_t$.
\begin{lem}\label{lem-endkollar}
At least one of the endpoints corresponds to a Koll\'ar component.
\end{lem}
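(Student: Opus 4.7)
By Theorem~\ref{thm-propertyofKollar}(1) the set $\cD^{\mathrm{KV}}(X,\Delta+D)$ is non-empty, so we may pick a Koll\'ar valuation $v_*$ in it. If $v_*\in\{v_0,v_1\}$, then $v_*$ is a divisorial Koll\'ar valuation, and using \cite[Corollary~1.4.3]{BCHM} to produce the model $Y_t$ extracting only the corresponding $E_t$ together with the crepant comparison $(Y_t,\mu_{t*}^{-1}\Delta+E_t+\mu_{t*}^{-1}D)\cong_{\mathrm{crep}}(X,\Delta+D)$ shows $(Y_t,\mu_{t*}^{-1}\Delta+E_t)$ is plt; so $E_t$ is a Koll\'ar component and we are done. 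Henceforth assume $v_*$ corresponds to an interior $t_*\in(0,1)$. Since $\dim\cD(X,\Delta+D)=1$, we may take a Koll\'ar model $\mu\colon Y\to X$ that extracts both $E_0$ and $E_1$, so that $(Y,\mu_*^{-1}\Delta+E_0+E_1+D_Y)$ is q-dlt with $-K_Y-\mu_*^{-1}\Delta-E_0-E_1-D_Y$ $\mu$-ample.

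Next I translate the Koll\'ar-component condition at the endpoints into a numerical inequality. Let $Y_i\to X$ be the model extracting only $E_i$ (from \cite[Corollary~1.4.3]{BCHM}) and, after a small modification, let $\pi\colon Y\to Y_0$ and $\pi'\colon Y\to Y_1$ be the induced morphisms. Set $b_i=A_{X,\Delta}(\mathrm{ord}_{E_i})$ and define $c,c'\ge 0$ by
\[
\pi^*E_0^{Y_0}=E_0+c\,E_1,\qquad \pi'^*E_1^{Y_1}=E_1+c'\,E_0.
\]
Using crepancy with $(X,\Delta+D)$, every divisorial lc place $F$ of $(Y_0,\mu_{0*}^{-1}\Delta+E_0)$ is automatically an lc place of $(X,\Delta+D)$ (indeed, $a(F,X,\Delta+D)=-1-\mathrm{ord}_F(\mu_{0*}^{-1}D)\geq -1$ forces $\mathrm{ord}_F(\mu_{0*}^{-1}D)=0$ and $a(F,X,\Delta+D)=-1$), hence equals some $v_q$ with $q\in\bQ\cap[0,1]$. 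Evaluating $v_q$ in monomial coordinates on $Y$ near the generic point of $E_0\cap E_1$ via the pullback formula above, the lc-place equation $A_{Y_0,\mu_{0*}^{-1}\Delta+E_0}(v_q)=0$ simplifies to $q(cb_0-b_1)=0$. Thus $E_0$ is a Koll\'ar component iff $cb_0<b_1$, and symmetrically $E_1$ iff $c'b_1<b_0$; the non-strict inequalities $cb_0\le b_1$ and $c'b_1\le b_0$ follow from log-canonicity, so both endpoints fail to be Koll\'ar components iff $cc'=1$.

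The main obstacle is thus to establish the strict inequality $cc'<1$. I plan to do so by a two-dimensional toric computation on a formal transverse slice along the codimension-two stratum $E_0\cap E_1$: the qdlt structure of $(Y,\mu_*^{-1}\Delta+E_0+E_1+D_Y)$ guarantees that such a transverse slice is toroidal with $E_0$ and $E_1$ as its two coordinate divisors. Writing the primitive integer vectors representing $E_i$ in the local transverse cone $\sigma=\mathrm{cone}(w_1,w_2)$ as $v_i=\alpha_i w_1+\beta_i w_2$, a direct 2D toric calculation yields
\[
cc'=\frac{\alpha_1\beta_0}{\alpha_0\beta_1}.
\]
Since $v_0$ and $v_1$ are distinct primitive rays in the common cone $\sigma$ and (up to swapping the labels) $v_0$ comes before $v_1$ in the fan ordering, one has $\alpha_1\beta_0<\alpha_0\beta_1$, i.e.\ $cc'<1$, contradicting $cc'=1$. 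The delicate step is to verify that the local toric multiplicities extracted from the transverse slice coincide with the globally defined $c,c'$ in the pullback formulas, which follows from the compatibility of qdlt structures with restriction to a transverse slice at the codimension-two stratum.
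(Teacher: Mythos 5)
Your proposal has genuine gaps at the two steps on which the whole argument rests. First, in the interior case you assert ``we may take a Koll\'ar model $\mu\colon Y\to X$ that extracts both $E_0$ and $E_1$.'' This does not follow from $\dim\cD(X,\Delta+D)=1$ or from the existence of some Koll\'ar valuation $v_*$: a Koll\'ar model for an interior $v_*$ only extracts divisors $E$ with $v_*\in\QM(Y,E)$, and these need not be the endpoints $E_0,E_1$. The existence of a Koll\'ar model extracting precisely $E_0$ and $E_1$ is essentially Proposition \ref{prop-nghbKollar}, whose proof \emph{assumes} both endpoints are Koll\'ar components — exactly what you are trying to establish — so this step is circular. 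Second, even granting such a model, the coefficients $c,c'$ in $\pi^*E_0^{Y_0}=E_0+cE_1$ and $\pi'^*E_1^{Y_1}=E_1+c'E_0$ are \emph{global} intersection-theoretic quantities: $c=-\left(E_0\cdot C\right)/\left(E_1\cdot C\right)$ for a curve $C$ in a fiber of $E_1\to\pi(E_1)$, determined by negativity over the whole center of $E_1$ on $Y_0$. They are not computable from a formal transverse slice at the generic point of $E_0\cap E_1$, so the claimed identity $cc'=\alpha_1\beta_0/(\alpha_0\beta_1)$ and the resulting strict inequality $cc'<1$ have no basis; the ``delicate step'' you flag is not a technicality but the point where the argument breaks. (A smaller issue: in the endpoint case, crepant equivalence of $(Y_t,\mu_{t*}^{-1}\Delta+E_t+\mu_{t*}^{-1}D)$ with $(X,\Delta+D)$ only gives log canonicity, not pltness of $(Y_t,\mu_{t*}^{-1}\Delta+E_t)$ — pltness there is precisely the Koll\'ar-component condition, which is what can fail.)

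For comparison, the paper's proof avoids all of this. Assuming $E_1$ is not a Koll\'ar component, the extraction $Y_1$ of $E_1$ alone has $E_0$ as an lc place of $(Y_1,E_1+\mu_{1*}^{-1}\Delta)$, so one can further extract $E_0$ crepantly to obtain $Y_{10}$ on which $-(K_{Y_{10}}+E_0+E_1+\mu_{10*}^{-1}\Delta)$ is a pullback from $Y_1$, hence nef with non-big restriction to $E_0$ and big restriction to $E_1$. If $E_0$ also fails, the symmetric construction gives a crepant birationally equivalent model with the bigness reversed, contradicting the well-definedness of bigness of the restriction among crepant models (as set up in \S\ref{s-dual complex dim 1}). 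If you want a numerical route, the standard tie-breaking argument (which the paper also mentions) is the way to go; the transverse-slice toric computation cannot recover the contraction coefficients you need.
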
 
\begin{proof}
Assume one ending point, say $E_1$, is not a Koll\'ar component. Then we know that we can construct a model $\mu_1\colon Y_1\to X$ which precisely extracts $E_1$. From our assumption,  $\mu_*^{-1}(D)$ does not contain the log canonical center  of $(Y_1, E_1+\mu_{1*}^{-1}(\Delta))$ properly contained in $E_1$. Therefore, 
$A_{{Y_1}, E_1+\mu_{1*}^{-1}(\Delta)}(E_0)=0$, so we can get a model $\mu_{10}\colon Y_{10}\to Y_1\to X$ extracting $E_0$, such that 
$-(K_{Y_{10}}+ E_0+E_1+\mu_{10*}^{-1}(\Delta))$ is nef, as it is the pull back of $-(K_{Y_{1}}+ E_1+\mu_{1*}^{-1}(\Delta))$.

Similarly, if $E_0$ is not a Koll\'ar component, we can get $\mu_{01}\colon Y_{01}\to X$ and $-(K_{Y_{01}}+ E_0+E_1+\mu_{01*}^{-1}(\Delta))$ is nef. In particular, $(Y_{10}, E_0+E_1+\mu_{10*}^{-1}(\Delta))$ and  $(Y_{01}, E_0+E_1+\mu_{01*}^{-1}(\Delta))$ are crepant birationally equivalent. 

However, the restriction of $-(K_{Y_{10}}+ E_0+E_1+\mu_{10*}^{-1}(\Delta))$ on $E_1$ is big and on $E_0$ is not big, while the restriction of $-(K_{Y_{01}}+ E_0+E_1+\mu_{01*}^{-1}(\Delta))$ on $E_0$ is big and on $E_1$ is not big. A contradiction. 
\end{proof}
By Lemma \ref{lem-endkollar}, we can always assume $E_0$ is a Koll\'ar component. 
Lemma \ref{lem-endkollar} also follows from the standard tie breaking argument, but the above proof sheds more light on our approach.  

\begin{prop}\label{prop-nghbKollar}
Assume $\cD(X,\Delta+D)$ is homeomorphic to $[0,1]$, and $E_0$, $E_1$ are Koll\'ar components. Then there exists a Koll\'ar model which precisely extracts $E_{0}$ and $E_{1}$.
\end{prop}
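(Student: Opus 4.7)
The plan is to extract $E_0$ and $E_1$ simultaneously to build a single model $Y_{01}$, and then to establish ampleness of $-(K_{Y_{01}}+\Delta_{01})$ via a two-ray analysis on the relative Mori cone, using the Koll\'ar hypothesis at each endpoint crucially.

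First, by \cite[Corollary 1.4.3]{BCHM} together with the construction at the start of the section, there exists a $\mathbb{Q}$-factorial birational model $\mu_{01}\colon Y_{01}\to X$ extracting exactly $E_0$ and $E_1$. Running the $(\mu_{01*}^{-1}D)$-MMP over $X$ --- which consists only of flips, since $E_0,E_1$ have coefficient one --- I may assume $-(K_{Y_{01}}+\Delta_{01})$ is nef over $X$, where $\Delta_{01}:=\mu_{01*}^{-1}\Delta+E_0+E_1$. By construction $\rho(Y_{01}/X)=2$.

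Next I would verify that $(Y_{01},\Delta_{01})$ is qdlt. Away from $E_0\cap E_1$, the pair is a crepant small modification of the plt Koll\'ar pair $(Y_i,B_i+E_i)$ near each $E_i$, so it is plt there. Near the generic point of $E_0\cap E_1$ --- which is nonempty because the $1$-simplex of $\cD(X,\Delta+D)\cong [0,1]$ corresponds precisely to this codimension-$2$ lc center --- the two coefficient-one components meet transversally, yielding the required toroidal local structure.

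The core of the proof is showing that $-(K_{Y_{01}}+\Delta_{01})$ is $\mu_{01}$-ample. Since $\rho(Y_{01}/X)=2$, the relative Mori cone $\overline{NE}(Y_{01}/X)$ has two extremal rays $R_0,R_1$. I would argue each $R_i$ is divisorial: the restriction $-(K_{Y_{01}}+\Delta_{01})|_{E_j}$ is nef and big by crepant-birational comparison with the ample $-(K_{Y_j}+B_j+E_j)|_{E_j}$ on the Koll\'ar model $Y_j$, and a $2$-ray game argument rules out small contractions inside $E_0\cap E_1$, forcing each extremal ray to contract one of $E_0, E_1$. By crepant-birational uniqueness of Koll\'ar models, the divisorial contractions are $\phi_1\colon Y_{01}\to Y_0$ (contracting $E_1$) and $\phi_0\colon Y_{01}\to Y_1$ (contracting $E_0$).

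Positivity on the two rays is then immediate from the discrepancy identity. Setting $a:=A_{Y_0,B_0+E_0}(E_1)$, which is positive by plt-ness and $E_1\ne E_0$, one has
\[
K_{Y_{01}}+\Delta_{01}\;=\;\phi_1^*(K_{Y_0}+B_0+E_0)+a\,E_1,
\]
so for a curve $C$ generating the $\phi_1$-contracted ray (so that $\phi_{1*}C=0$ and $E_1\cdot C<0$),
\[
-(K_{Y_{01}}+\Delta_{01})\cdot C\;=\;0-a(E_1\cdot C)\;>\;0.
\]
The symmetric computation on the other ray, with $a':=A_{Y_1,B_1+E_1}(E_0)>0$, gives strict positivity there. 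Kleiman's criterion then yields $\mu_{01}$-ampleness.

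The principal obstacle is the identification in Step 3 of the two extremal rays as divisorial contractions onto the two Koll\'ar models $Y_0,Y_1$. Ruling out small contractions --- which could contract curves inside the codimension-$2$ locus $E_0\cap E_1$ and merely deliver semi-ampleness --- uses both Koll\'ar hypotheses essentially, since it is the ampleness (not just nef and big) of the relative anti-log-canonical on each one-divisor Koll\'ar model $Y_i$ that controls the birational geometry of the bad locus.
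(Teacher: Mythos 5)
Your overall strategy---extract both divisors, make $-(K_{Y_{01}}+\Delta_{01})$ nef by an MMP, and then upgrade nef to ample---starts the same way as the paper's proof, but it diverges at exactly the point where the real difficulty lies, and there it has a genuine gap. Your argument rests on the claim that the two extremal rays of $\overline{NE}(Y_{01}/X)$ are the divisorial contractions onto $Y_0$ and $Y_1$, with small contractions ``ruled out by a $2$-ray game argument.'' No such argument is given, and the structure of the paper's proof strongly indicates that this claim cannot be established (and is not needed): the authors do \emph{not} prove that $-(K_{Y_{01}}+\Delta_{01})$ is ample on $Y_{01}$. They pass to its ample model $h\colon Z\to X$, which is allowed to be a nontrivial \emph{small} contraction of $Y_{01}$, and the entire second half of their proof is devoted to showing that the image pair $(Z,F_0+F_1+h_*^{-1}\Delta)$ is still qdlt. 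Concretely, a curve $C\subset E_0\cap E_1$ with $(K_{Y_{01}}+\Delta_{01})\cdot C=0$ is not excluded by any discrepancy computation of the type you perform (your positivity computation only sees curves contracted by a divisorial contraction that already exists as a morphism $Y_{01}\to Y_i$, which itself is not guaranteed---contracting $E_{1-i}$ from $Y_{01}$ may require flips first). Your Kleiman argument therefore proves ampleness only under a hypothesis you have not verified.

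Two further points. First, your assertion that $-(K_{Y_{01}}+\Delta_{01})|_{E_j}$ is big ``by crepant-birational comparison'' with the ample model $Y_j$ is not immediate, since $K_{Y_{01}}+\Delta_{01}$ differs from the pullback of $K_{Y_j}+B_j+E_j$ by a multiple of $E_{1-j}$; the paper proves this bigness by contradiction, showing that if the ample model contracted $E_0$ then the resulting pair would be crepant to the non-plt pair $(Y_{01},\Delta_{01})$ and, by the negativity lemma, also crepant to the plt pair $(Y_1,E_1+\mu_{1*}^{-1}\Delta)$, which is absurd. Second, and more importantly, the content you are missing is the verification that qdlt-ness survives the small contraction $Y_{01}\dashrightarrow Z$: the paper shows every component $W$ of $F_0\cap F_1$ has codimension two in $Z$ (via Cohen--Macaulayness of $\cO_F$ for $F=m(A_{X,\Delta}(F_0)F_0+A_{X,\Delta}(F_1)F_1)$ and Hartshorne's connectedness theorem), that the pair is toroidal at $\eta(W)$ by the classification of surface lc singularities, and that all other exceptional divisors over $Z$ have positive log discrepancy. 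If you want to complete your route instead, you would need an actual proof that no $(K_{Y_{01}}+\Delta_{01})$-trivial small ray exists, which is essentially equivalent to the ampleness you are trying to prove.
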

\begin{proof}
Let $\mu_{01}\colon Y_{01}\to X$ be a $\bQ$-factorial model which extracts $E_0$ and $E_1$ such that  $-K_{Y_{01}}-E_0-E_1-\mu_{01*}^{-1}\Delta$ is nef. In particular, $({Y_{01}}, E_0+E_1+\mu^{-1}_{01*}\Delta)$ is qdlt (see \cite[Proposition 34]{dFKX-dualcomplex}).
We claim  $(-K_{Y_{01}}-E_0-E_1-\mu^{-1}_{01*}\Delta)|_{E_i}$ is big for $i=0,1$. If not, say for $i=0$ this is not true, then let $f\colon Y_{01}\to Y_1'$ be the ample model of $-K_{Y_{01}}-E_0-E_1-\mu^{-1}_{01*}\Delta$  over $X$ which contracts $E_0$. Then $(Y_1', f_*(E_1+\mu^{-1}_{01*}\Delta))$ is not plt as it is crepant birationally equivalent to $({Y_{01}}, E_0+E_1+\mu^{-1}_{01*}\Delta)$. On the other hand, $Y_1'$ is isomorphic to $Y_1$ in codimension one. Thus by the negativity lemma, 
$({Y_1'},f_*(E_1+\mu^{-1}_{01*}\Delta))$  is crepant birationally equivalent to $({Y_1},E_1+\mu_{1*}^{-1}(\Delta))$,
which contradicts to the assumption that $E_1$ is a Koll\'ar component.

So if we take  the anti-canonical model $h\colon Z\to X$ of $Y_{01}$ over $X$ for $-K_{Y_{01}}-E_0-E_1-\mu^{-1}_{01*}\Delta$, it contains the birational transform $F_i$ of $E_i$ for $i=0,1$.

We claim a component $W$ of $F_0\cap F_1$ is of codimension two in $Z$. In fact, for sufficiently divisible $m$, the effective divisor $F=m(A_{X,\Delta}(F_0)F_0+A_{X,\Delta}(F_1)F_1)$ is Cartier and supported on $F_0\cup F_1$, hence $\cO_F$ is Cohen-Macaulay by \cite[Corollary 5.25]{KollarMori}.  If we localize at the generic point $\eta$ of $W$, as ${\rm Spec\,}\cO_{F,\eta}\setminus \{\eta\}$ is disconnected, $\dim({\rm Spec\,}\cO_{F,\eta})=1$ by \cite[Proposition 2.1]{Hartshorne-connectedness}.
By the classification of surface log canonical singularities (\cite[Section 3.3]{Kollar13}),
the pair $(Z,F_1+F_2+h_*^{-1}\Delta)$ is toriodal in a neighborhood $U$ of the generic point $\eta(W)$ of $W$, and $Y\to Z$ is isomorphic over $\eta(W)$, from the generic point of $E_0\cap E_1$. Then for any divisor $E$ exceptional over $Z$ whose center contained $Z\setminus U$, we have
\[
A_{Z,F_1+F_2+h_*^{-1}\Delta}(E)=A_{Y_{01} ,E_0+E_1+\mu^{-1}_{01*}\Delta}(E)>0\, .
\]
Thus $(Z,F_1+F_2+h_*^{-1}\Delta)$ is qdlt. In particular, it is a Koll\'ar model. 
\end{proof}

\begin{proof}[Proof of Theorem \ref{thm-nghbKollar}]
By Lemma \ref{lem-endkollar}, we may assume one of $E_0$ or $E_1$, say $E_0$, is a Koll\'ar component. By Proposition  \ref{prop-nghbKollar}, if $E_1$ is a Koll\'ar component, then Case (2) happens. So it remains to prove if $E_1$ is not a Koll\'ar component, then for any $s\in (0,1]\cap \bQ$, $E_s$ is not a Koll\'ar component.

Let $\mu_1\colon Y_1\to X$ be the model extracting $E_1$. From our assumption, $\mu_{1*}^{-1}D$ does not pass through the lc center of $(Y_1, E_1+\mu_{1*}^{-1}\Delta)$ properly contained in $E_1$. As a result,  $E_0$ and therefore any $E_s$ are lc places of $(Y_1, E_1+\mu_{1*}^{-1}\Delta)$. So we can extract $E_s$ over $Y_1$ to get a $\bQ$-factorial model $\mu_{1s}\colon Y_{1s}\to Y_1\to X$. Since 
\begin{eqnarray*}
\cD(X,\Delta+D) & \simeq &\cD(Y_1,E_1+\mu_{1*}^{-1}(\Delta+D)) \\ 
                       &\simeq  &\cD(Y_1,E_1+\mu_{1*}^{-1}\Delta)\simeq \cD(Y_{1s}, E_1+E_s+\mu_{1s*}^{-1}\Delta)\simeq[0,1]\, ,
\end{eqnarray*}
$W_1:=E_1\cap E_s$ is the only lc center properly contained in $E_1$ with $\dim(W_1)=\dim(X)-2$. In particular, $E_1$ and $W_1$ are normal and $W_1$ does not properly contain any lc center of $(Y_{1s}, E_1+E_s+\mu_{1s*}^{-1}\Delta)$. 
If we denote by $\nu\colon E_s^{\rm n}\to E_s$ the normalization, and write 
\[
(K_{Y_{1s}}+E_1+E_s+\mu_{1s*}^{-1}\Delta)|_{E_s^{\rm n}}=K_{E_s^{\rm n}}+\Delta_{E_s^{\rm n}} \, ,
\]
then $\cD(E_s^{\rm n}, \Delta_{E_s^{\rm n}})$ are two points corresponding to two disjoint lc centers $W_1'$ and $W_0'$. Since $(Y_{1s},E_1+E_s)$ is toroidal at the generic point $\eta(W_1)$ by the classification of surface log canonical singularities (see \cite[Section 3.3]{Kollar13}), $\nu\colon W_1'\to W_1$ is birational and $W_1\neq \nu(W'_0)$. Moreover, $W_0:=\nu(W'_0)$ does not meet $W_1$ as $W_1$ does not properly contain any lc center of $(Y_{1s}, E_1+E_s+\mu_{1s*}^{-1}\Delta)$. So $(Y_{1s}, E_1+E_s+\mu_{1s*}^{-1}\Delta)$ has two disjoint lc centers $W_0$ and $W_1$ properly contained in $E_s$.

We can run an $E_1$-minimal model program of $Y_{1s}$ over $X$, obtaining a birational model $Y_{1s}\dashrightarrow Y_s'$, which terminates by contracting $E_1$. This minimal model program is isomorphic outside $E_1$. Therefore, if we denote by $\mu_s'\colon Y'_{s}\to X$, then the divisorial part of ${\rm Ex}(\mu_s')$ is the birational transform $E'_{s}$ of $E_s$, and $(Y'_s,E'_{s}+{\mu_{s}'}_*^{-1}\Delta)$ has an lc center (isomorphic to $W_0$) properly contained in $E_s'$.

So if we extract $E_s$ to get a model $\mu_s\colon Y_s\to X$, such that $-(K_{Y_s}+E_{s}+\mu_{s*}^{-1}\Delta)$ is ample, then by the negativity lemma, $(Y_s, E_{s}+\mu_{s*}^{-1}\Delta)$ is not plt, i.e. $E_s$ is not a Koll\'ar component.
\end{proof}

The following result also illuminates the situation when $E_s$ is a Koll\'ar component for $s\in (0,1)$ by taking affine cone over a klt log Fano pair with complement containing only two lc places.

\begin{thm}\label{thm-CY-Gm}
Let $(X,\Delta)$ be a klt log Fano pair. Let $D$ be a $\bQ$-complement of $(X,\Delta)$ such that $\cD(X,\Delta+D)$ contains only two isolated points $E_1$ and $E_2$. Then $\Aut^0(X,\Delta+D)\cong \bG_m$, and both $E_1$ and $E_2$ induce product test configurations of $(X,\Delta+D)$ that are opposite to each other up to positive scaling.
\end{thm}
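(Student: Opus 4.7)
The plan is to pass to the affine cone over $(X,\Delta)$ and apply the one-dimensional dual complex results from Section \ref{s-dual complex dim 1}. Fix $r>0$ such that $L := -r(K_X+\Delta)$ is Cartier and very ample, and form the affine cone $Y := \Spec\bigoplus_{m\geq 0} H^0(X, mL)$ with vertex $o$ and cone $\bG_m$-action $\lambda$; write $\Delta_Y, D_Y$ for the cones over $\Delta, D$. Then $o \in (Y, \Delta_Y)$ is a klt singularity with $\bQ$-complement $\Delta_Y+D_Y$, and a standard computation identifies $\cD(Y, \Delta_Y+D_Y)$ with the topological join $\{v_0\} * \{E_1, E_2\}$, where $v_0$ is the canonical cone valuation (obtained by blowing up $o$). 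This join is homeomorphic to $[0,1]$ with $v_0$ at an interior point and the cone extensions $v_{E_1}, v_{E_2}$ at the endpoints.

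The key observation is that $v_0$ is a Koll\'ar valuation on $Y$: its associated graded is $R_Y$ with the cone grading, so the degeneration is $Y$ itself, which is klt. Since $v_0$ is interior to the interval, case (1) of Theorem \ref{thm-nghbKollar} is ruled out and case (2) holds, so every valuation in $\cD(Y, \Delta_Y+D_Y)$ is Koll\'ar on $Y$. In particular $v_{E_1}, v_{E_2}$ are Koll\'ar components of $Y$, and by Proposition \ref{prop-nghbKollar} there is a $\lambda$-equivariant simultaneous Koll\'ar model $\pi\colon Z \to Y$ extracting both, with $(Z, F_1+F_2+\tilde\Delta_Y+\tilde D_Y)$ qdlt and toroidal along components of $F_1 \cap F_2$.

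Next exploit the $\lambda$-equivariance of $v_{E_1}$: the induced filtration on $R_Y$ is compatible with the cone grading, so $\Gr_{v_{E_1}} R_Y$ is bi-graded and $Y_1 := \Spec\Gr_{v_{E_1}} R_Y$ carries a $\bG_m^2$-action. The toroidal structure near $F_1 \cap F_2$ shows that $v_{E_1}$ is locally monomial; combined with $\lambda$-equivariance this yields a $\lambda$-equivariant isomorphism $Y_1 \cong Y$. Hence the auxiliary $\bG_m$-action from the valuation grading descends to a 1-parameter subgroup $\lambda_1 \subseteq \Aut^0(X,\Delta+D)$, proving $E_1$ induces a product test configuration; the same argument yields $\lambda_2$ from $E_2$. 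Since $v_0$ is a convex combination of $v_{E_1}, v_{E_2}$ inside the interval of the dual complex, the corresponding relation $(1-s)\mu_1 + s\mu_2 = \lambda$ among 1-parameter subgroups forces $\lambda_1$ and $\lambda_2$ to be opposite up to positive scaling on $(X,\Delta+D)$.

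Finally, $\Aut^0(X,\Delta+D)$ has rank exactly one: a strictly larger torus would lift to a $\bG_m^3$-action on $Y$ commuting with $\lambda$, yielding $\bG_m$-equivariant divisorial lc places of $(Y, \Delta_Y+D_Y)$ outside the interval $[v_{E_1}, v_{E_2}]$, and hence extra lc places in $\cD(X,\Delta+D)$ beyond $\{E_1, E_2\}$, a contradiction. The main technical obstacle is making rigorous the $\lambda$-equivariant isomorphism $Y_1 \cong Y$ in the third paragraph: the local toroidal picture near $F_1 \cap F_2$ provides the right formal model, and globalizing requires careful use of the qdlt Koll\'ar structure together with rigidity of Koll\'ar degenerations in the presence of a pre-existing torus action.
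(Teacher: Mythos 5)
Your reduction to Section \ref{s-dual complex dim 1} via the affine cone does not go through, for two reasons. First, Theorem \ref{thm-nghbKollar} (and all of Section \ref{s-dual complex dim 1}, cf.\ \ref{prop-nghbKollar} and the setup in 3.1) is stated for a klt singularity with a complement whose \emph{only} lc center is the closed point. The cone $(Y,\Delta_Y+D_Y)$ over the log CY pair $(X,\Delta+D)$ never satisfies this: since $E_1,E_2$ are lc places, $(X,\Delta+D)$ has lc centers, and their cones are positive-dimensional lc centers of $(Y,\Delta_Y+D_Y)$ through $o$. Relatedly, the pure cone extensions of $\ord_{E_1},\ord_{E_2}$ are not centered at $o$, so they are not the "endpoints" of the dual complex of the singularity $o\in Y$ in the sense of Question \ref{que-kv}; the honest picture is the one in Example \ref{ex-conecubic}, where one must perturb the complement (general members of $(f,\bm^{\ell})$) to make $o$ the unique lc center, and the resulting complex is a truncation of the join whose boundary points are quasi-monomial mixtures $(\ord_o, s\cdot\ord_{E_i})$, not $E_1,E_2$ themselves. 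So the interval you want to feed into Theorem \ref{thm-nghbKollar} is not the dual complex of any configuration covered by that theorem.

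Second, and more seriously, even if every valuation in your interval were Koll\'ar, this would not yield the conclusion. Being a Koll\'ar (or special) valuation gives finite generation of $\Gr_{v}R_Y$ and a \emph{klt} degeneration; the theorem asserts the degeneration induced by $E_i$ is a \emph{product}, i.e.\ $\Gr_{E_i}$ of $(X,\Delta+D)$ is isomorphic to $(X,\Delta+D)$ itself. Your third paragraph asserts the equivariant isomorphism $Y_1\cong Y$ from "the local toroidal picture near $F_1\cap F_2$" and defers the globalization as "the main technical obstacle" — but that obstacle is the entire theorem, and local toroidality along a codimension-two stratum cannot force a global triviality statement. The paper's proof supplies exactly the missing mechanism: it first shows $E_1$ (hence $E_2$) is special by a dlt/adjunction analysis of the extraction $\cY\to\cX$ on the test configuration, then builds the two-parameter degeneration $(\fX,\Delta_{\fX}+\fD_{\fX})\to\bA^2_{s,t}$, invokes $\Aut^0(\fX_{(0,0)},\Delta_{\fX_{(0,0)}}+\fD_{\fX_{(0,0)}})\cong\bG_m$ from \cite{ABB+}, and uses the resulting $1$-PS of weight $(a,b)$ with $a,b>0$ to conclude $(X,\Delta+D)\cong(\fX_{(0,0)},\cdots)$, which simultaneously gives $\Aut^0(X,\Delta+D)\cong\bG_m$ and the "opposite product test configurations" statement via \cite{CZ22b}. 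Your final rank-one argument and the relation $(1-s)\mu_1+s\mu_2=\lambda$ likewise presuppose that the $1$-PS's $\lambda_1,\lambda_2$ already exist inside $\Aut^0(X,\Delta+D)$, which is what needs to be proven.
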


\begin{proof}
Let $Y\to X$ be the extraction of $E_1$ and $E_2$ such that each $E_i$ is anti-ample over $X$. 
Let $(\cX, \Delta_{\cX} +\cD_{\cX}) \to \bA^1_s$ be the weakly special test configuration induced by $E_1$ in the sense of boundary polarized CY pairs as in \cite{BLX-openness,ABB+}. Thus $\cX\to \bA^1$ is of Fano type. Let $\cE_i$ $(i=1,2)$ be the divisor over $\cX$ corresponding to $E_i\times \bA^1_s$. 
Let $\cY \to \cX$ be the extraction of $\cE_1$ and $\cE_2$ such that each $\cE_i$ is anti-ample over $\cX$. We claim that $(\cY, \Delta_{\cY} + \cD_{\cY} + \cY_0 + \cE_1 + \cE_2)$ is dlt. Clearly it is log canonical as it is crepant to $(\cX, \Delta_{\cX} +\cD_{\cX} + \cX_0)$ which is log canonical. In addition, it is plt away from $\cY_0$ as it is isomorphic to $(Y, \Delta_Y+ \cD_Y + E_1 + E_2)\times (\bA^1\setminus \{0\})$.  Denote by $\cE_{i,0}:= \cE_i|_{\cY_0}$. Then each $\cE_{i,0}$ is connected as the general fiber of $\cE_i\to \bA^1$ is connected. Since $(\cY_0, \Delta_{\cY_0}+\cD_{\cY_0} + \cE_{1,0} + \cE_{2,0})$ is crepant birational to $(\cX_0, \Delta_{\cX_0} + \cD_{\cX_0})$, we know that $\cE_{i,0}$ is reduced whose irreducible components are lc places of $(\cX_0, \Delta_{\cX_0} + \cD_{\cX_0})$. By \cite[Proposition 8.7]{ABB+}, we know that $\cD(\cX_0, \Delta_{\cX_0} + \cD_{\cX_0})$ has dimension $0$. Thus $\cE_{1,0}$ and $\cE_{2,0}$ are disjoint prime divisors. By \cite[Proposition 4.37]{Kollar13} we know that $\cE_{1,0}$ and $\cE_{2,0}$ are the only lc places of $(\cX_0, \Delta_{\cX_0} + \cD_{\cX_0})$. 
By inversion of adjunction, we know that $\cE_{1,0}$ and $\cE_{2,0}$ are the only minimal lc centers of $(\cY, \Delta_{\cY} + \cD_{\cY} + \cY_0 + \cE_1 + \cE_2)$. Since $\cY_0$ is regular at the generic point $\eta_{i,0}$ of $\cE_{i, 0}$, we have that $\cY$ is regular at $\eta_{i,0}$ as $\cY_0$ is Cartier in $\cY$. As a result, $(\cY, \Delta_{\cY}+\cD_{\cY} + \cY_0+ \cE_{1} + \cE_{2})$ is dlt at $\eta_{i,0}$ which implies that it is dlt everywhere. 

Next we show that $(\cX_0, \Delta_{\cX_0})$ is klt. From the above arguments we know that $\cE_{1,0}$ and $\cE_{2,0}$ are the only lc places of the slc pair $(\cX_0, \Delta_{\cX_0}+\cD_{\cX_0})$. Moreover, since $\ord_{E_i}(D)>0$, we have $\ord_{\cE_i}(\cD_{\cX})>0$ which implies that $\ord_{\cE_{i,0}}(\cD_{\cX_0})>0$. Thus $(\cX_0, \Delta_{\cX_0})$ is klt. 

So far we have shown that $E_1$ is a special divisor over $(X,\Delta)$. By symmetry so is $E_2$. Moreover, since $(\cX_0, \Delta_{\cX_0}+\cD_0)$ admits a $\bG_m$-action, we know that each $\cE_{i,0}$ induces a product test configuration of $(\cX_0, \Delta_{\cX_0}+\cD_0)$ by \cite{CZ22b} (see also \cite[Theorem 4.8]{ABB+}). In particular, $\cE_{2,0}$ is a special divisor over $(\cX_0, \Delta_{\cX_0})$. Thus $\cE_2$ provides a family of special divisors over $(\cX, \Delta_{\cX})$. By the proof of \cite[Proposition 4.5]{XZ-HRFG} (also see \cite[Theorem 5.7]{Xu-Kbook}), there exists a family of special test configurations $(\fX,\Delta_{\fX}) \to \bA^2_{s,t}$ of $\cX$ induced by $\cE_2$ where 
\[
(\fX,\Delta_{\fX})\times_{\bA^2}(\bA^2\setminus (t=0))\cong (\cX,\Delta_{\cX})\times (\bA^1_t\setminus\{0\})\,.
\] 
In particular, $(\fX_{(0,0)}, \Delta_{\fX_{(0,0)}})$ is a klt log Fano pair. Denote by $\fD_{\fX}$ the closure of $\cD\times  (\bA^1_t\setminus\{0\})$ in $\fX$. Then by \cite[Theorem 6.3]{ABB+} we know that $(\fX,\Delta_{\fX}+\fD_{\fX})\to \bA^2$ is a family of boundary polarized CY pairs. Since $\cE_2$ is $\bG_m$-equivariant for the natural $\bG_m$-action on $\cX$, we know that $\fX\to \bA^2_{s,t}$ is $\bG_m^2$-equivariant with the standard $\bG_m^2$-action on $\bA^2$. Moreover, the above arguments implies that the central fiber $(\fX_{(0,0)}, \Delta_{\fX_{(0,0)}}+\fD_{\fX_{(0,0)}})$ has only two lc places $\fE_{1,(0,0)}$ and $\fE_{2,(0,0)}$ which induce $1$-PS's of $\bG_m^2$ of weight $(1,0)$ and $(0,1)$ respectively. On the other hand, by \cite[Proof of Proposition 8.11]{ABB+} we know that 
\[
\Aut^0(\fX_{(0,0)}, \Delta_{\fX_{(0,0)}}+\fD_{\fX_{(0,0)}})\cong \bG_m\,.
\] 
As a result, we know that there exists a non-trivial $1$-PS $\sigma: \bG_m\to \bG_m^2$ of weight $(a,b)$ such that $\sigma$ acts trivially on $\fX_{(0,0)}$. Since the $1$-PS's of weight $(1,0)$ and $(0,1)$ are induced by valuations with different centers, we know that $a,b$ have the same sign, and we may assume $a>0$ and $b>0$. Thus by pulling back $\fX\to \bA^2$ under $\sigma$ we obtain a test configuration of $(X, \Delta+D)$ whose $\bG_m$-action on the central fiber $(\fX_{(0,0)}, \Delta_{\fX_{(0,0)}}+\fD_{\fX_{(0,0)}})$ is trivial. Therefore, we must have 
\[
(X, \Delta+D)\cong (\fX_{(0,0)}, \Delta_{\fX_{(0,0)}}+\fD_{\fX_{(0,0)}})
\]
which implies that $\Aut^0(X, \Delta+D)\cong \bG_m$, and the proof is finished by \cite{CZ22b}.
\end{proof}

\begin{expl}\label{ex-conecubic}
Let $V=\mathbb P^2=\bP(x,y,z)$ and $D_V$ be the nodal cubic $D_V=(zx^2+zy^2+y^3=0)$. The lc places of $(V,D_V)$ is a cone over a circle.  Let $u_t$ $(0<t<+\infty)$ be the quasi-monomial valuations with weight $(1,t)$ over the two branches of $D_V$ at $[0:0:1]$. Then \cite[Section 6]{LXZ-HRFG} shows that $u_t$ is a special valuation, i.e. ${\rm Proj}\left({\rm Gr}_{u_t} k[x,y,z]\right)$ is a klt Fano variety if and only if 
\[
t\in \left(\frac{7-3\sqrt{5}}{2},\frac{7+3\sqrt{5}}{2}\right)\, .
\]

Now we consider the affine cone $(\bA^3,D)$ of $(V,D_V)$ with polarization $\cO_V(1)$, so $o\in \bA^3$ is the origin, and $D$ is the divisor which is the cone over $D_V$ whose affine equation is given by $D=(zx^2+zy^2+y^3=0)\subset \bA^3$. 
Then the dense open subcomplex $\cD(\bA^3,D)^\circ \subset\cD( \bA^3,D)$ consisting of valuations centered at $o$ is of the form $v_{s,t}=(\ord_{o},s\cdot u_{\frac{t}{s}})$, where  $s,t\in [0,+\infty)$ with $v_{s,0}$ and $v_{0,s}$ glued (corresponding to $(\ord_o, s\cdot \ord_D)$). By writing $v = (\ord_{o}, u)$ for $u\in \Val_{V}$ we mean that for $g = \sum_{m\geq 0} g_m \in \cO_{\bA^3, o}$ with $g_m$ the homogeneous component of degree $m$ of $g$, we have 
\[
v(g) : = \min \{m + u(g_m) \mid g_m\neq 0\}.
\]
From this expression, we know that $v_{s,t}$ is a quasi-monomial valuations with weights $(1,s,t)$ in the blow-up of $\bA^3$ at $o$ with respect to the exceptional divisor and the two branches of the strict transform  of $D$.
Let $\bm$ be the maximal ideal corresponding to $o$. So for $\fa=(f,\bm^{\ell})$ with $f = zx^2+zy^2+y^3$ and $\ell \in \bN$, we have
\[
v_{s,t}(\fa)= \begin{cases}
v_{s,t}(f)=3+s+t &\text{if  $3+s+t \le \ell$}\\
\ell  &\text{if $3+s+t\ge \ell$}
\end{cases}
\]
and the log discrepancy $A_{\bA^3}(v_{s,t})=3+s+t $. For any fixed $\ell >3$, we choose a set of general generators $h_1, h_2, \cdots, h_m\in \fa$, and let $D':=\frac{1}{m}(H_1+H_2+\cdots +H_m)$ with $H_i := (h_i=0)$. Then the dual complex
\[
\cD(\bA^3, D') = \{v_{s,t}\in \cD(\bA^3, D)^\circ \mid s,t\geq 0\textrm{ and }s+t \leq \ell -3\}.
\]
Moreover, the set of Koll\'ar valuations  $\cD^{\rm KV}(\bA^3, D')$ consist of $\ord_o = v_{0,0}$ and $v_{s,t}$ with
\[
 s,t>0, \quad \frac{t}{s}\in \left(\frac{7-3\sqrt{5}}{2},\frac{7+3\sqrt{5}}{2}\right) \mbox{\ \ and \ \ } s+t\le \ell-3\, .
\]
As a result, we see that $\cD^{\rm KV}(\bA^3, D')$ is neither open in $\cD(\bA^3, D')$, nor  a finite union of open simplicies in any rational triangulation of $\cD(\bA^3, D')$. On the other hand, it is not hard to see that Conjecture \ref{conj-kollar valuations} holds in this example.
\end{expl}

\bibliography{ref}

\end{document}